\documentclass[reqno,11pt]{amsart}
\usepackage[letterpaper,hmargin=1.25in,vmargin=0.9in]{geometry}
\usepackage[T1]{fontenc}
\usepackage{lmodern}
\usepackage{microtype}
\usepackage{mathtools}
\usepackage{amssymb}

\usepackage{xcolor}
\usepackage[unicode]{hyperref}
\hypersetup{
    colorlinks=false,
    citecolor=blue,
    linkcolor=blue,
    urlcolor=blue
}
\newcommand{\R}{\mathbb R}
\newcommand{\Z}{\mathbb Z}
\DeclareMathOperator{\conv}{conv}
\DeclareMathOperator{\cs}{cs}
\DeclareMathOperator{\interior}{int}
\DeclareMathOperator{\vol}{vol}
\DeclareMathOperator{\spanop}{span}
\newcommand{\vn}[2]{\vol_{#1}(#2)}
\newcommand{\susp}[1]{\mathcal S(#1)}

\theoremstyle{plain}
\newtheorem{theorem}{Theorem}[section]
\newtheorem{lemma}[theorem]{Lemma}

\newtheorem{corollary}[theorem]{Corollary}

\theoremstyle{remark}
\newtheorem{remark}[theorem]{Remark}

\numberwithin{equation}{section}
\allowdisplaybreaks[1]
\title[Consequences of Mahler's inequalities]{A note on consequences of Mahler's inequalities}
\author{Yanlu Lian}
\address{School of Mathematics, Hangzhou Normal University, Hangzhou, 311121, China}
\email{yllian@hznu.edu.cn}

\author[Fei Xue]{Fei Xue\textsuperscript{*}}
\thanks{\textsuperscript{*}Corresponding author.}
\address{School of Mathematical Sciences, Nanjing Normal University, Nanjing, 210046, China}
\email{05429@njnu.edu.cn}

\subjclass[2020]{Primary 52A40; Secondary 11H06, 52C07}
\thanks{This work is supported by the National Natural Science Foundation of China grant (NSFC12671118), the Natural Science Foundation of Zhejiang
Province grant (LQ24A010008) and the Scientific Research Fund of Zhejiang Provincial Education Department
grant (Y202351675).}
\keywords{Makai's inequality, Mahler's inequality, polar successive minima, first covering minimum, mixed volume}
\hypersetup{pdftitle={A note on consequences of Mahler's inequalities},pdfauthor={Yanlu Lian; Fei Xue},pdfsubject={Makai's inequality from symmetric Mahler and basic Minkowski-type consequences},pdfkeywords={Makai's inequality, Mahler's inequality, successive minima, first covering minimum, mixed volume}}

\begin{document}
\begin{abstract}
We derive Makai's inequality $\vol_n(K)\vol_n((K-K)^\circ)\ge(n+1)/n!$ in every dimension from the symmetric Mahler theorem in one higher dimension, with equality precisely for simplices. The proof uses a classical two-layer construction, an identity for its polar volume, and a Rogers--Shephard volume estimate. We then record basic consequences of the Mahler and Makai inequalities for polar successive minima, successive minima at the Santal\'o point, critical determinants, first covering minima, and non-separable lattice arrangements. We also combine Alexandrov--Fenchel with the volume-product inequalities to obtain sharp bounds for paired mixed volumes.
\end{abstract}
\maketitle

\section{Introduction}

A convex body is a compact convex set with nonempty interior. Throughout, $n\ge1$, and $\vol_n$ denotes $n$-dimensional Lebesgue volume. A convex body $C$ is called $o$-symmetric if $C=-C$. For a convex body $K\subset\R^n$ containing the origin in its interior, its polar is
$$K^\circ=\{y\in\R^n:\langle x,y\rangle\le1\text{ for all }x\in K\}.$$
For an arbitrary convex body $K\subset\R^n$, put
$$D(K)=K-K,\qquad \cs(K)=\frac{K-K}{2}.$$
Both bodies are $o$-symmetric; the second is the central symmetral of $K$. Makai's conjecture \cite{Makai,MakaiMartini} asserts that simplices minimize $\vol_n(K)\vol_n(D(K)^\circ)$. This product is invariant under translations and invertible linear transformations.

We consider consequences of the symmetric and general Mahler inequalities; see the manuscripts \cite{OpenAIGeneral,OpenAISymmetric}. Our results are formulated as implications from these inequalities. The symmetric Mahler conjecture asserts that every $o$-symmetric convex body $C\subset\R^n$ satisfies
\begin{equation}\label{eq:sm}
 \vn nC\,\vn n{C^\circ}\geq\frac{4^n}{n!}.
\end{equation}
Applying this theorem in dimension $n+1$ proves Makai's conjecture in dimension $n$.

\begin{theorem}[Makai's inequality]\label{thm:transfer}
Let $n\geq 1$. Every convex body $K\subset\R^n$ satisfies
\begin{equation}\label{eq:makai}
 \vn nK\,\vn n{D(K)^\circ}\geq\frac{n+1}{n!}.
\end{equation}
Equality holds if and only if $K$ is an $n$-simplex.
\end{theorem}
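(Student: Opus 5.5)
The plan is to apply the symmetric Mahler inequality \eqref{eq:sm} in $\R^{n+1}$ to the classical two-layer body
$$C=\conv\bigl(K\times\{1\}\,\cup\,(-K)\times\{-1\}\bigr)\subset\R^{n+1}.$$
Both sides of \eqref{eq:makai} are translation invariant, so we may assume $o\in\interior K$. Then $C$ is an $o$-symmetric convex body. We compute $\vol_{n+1}(C^\circ)$ exactly in terms of $\vol_n(D(K)^\circ)$, and bound $\vol_{n+1}(C)$ from above in terms of $\vol_n(K)$ by a Rogers--Shephard estimate.

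\emph{Polar identity.} A point $(y,t)\in\R^n\times\R$ lies in $C^\circ$ iff $h_K(y)+t\le 1$ and $h_K(-y)-t\le 1$. The fibre over $y$ is therefore the interval $t\in[h_K(-y)-1,\,1-h_K(y)]$. It is nonempty iff $h_{D(K)}(y)=h_K(y)+h_K(-y)\le 2$, that is, iff $y\in 2D(K)^\circ$, and its length is $2-h_{D(K)}(y)$. Since $h_{D(K)}$ is the gauge of $D(K)^\circ$, integrating in polar coordinates (using $\int_{rL}\|y\|_L\,dy=\tfrac{n}{n+1}\,r\,\vol_n(rL)$) gives
$$\vol_{n+1}(C^\circ)=\int_{2D(K)^\circ}\bigl(2-h_{D(K)}(y)\bigr)\,dy=\frac{2^{n+1}}{n+1}\,\vol_n(D(K)^\circ).$$

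\emph{Volume of $C$.} The section of $C$ at height $1-2\lambda$ is $(1-\lambda)K+\lambda(-K)$, so
$$\vol_{n+1}(C)=2\int_0^1\vol_n\bigl((1-\lambda)K-\lambda K\bigr)\,d\lambda .$$
We then invoke the Rogers--Shephard inequality in its integrated form, $\int_0^1\vol_n((1-\lambda)K-\lambda K)\,d\lambda\le\frac{2^n}{n+1}\vol_n(K)$, with equality iff $K$ is a simplex. This is proved by the standard argument: write the integrand through the function $x\mapsto\vol_n(K\cap(x+K))$, whose $1/n$-th power is concave on $D(K)$, and integrate.

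\emph{Combining.} Together with \eqref{eq:sm} in dimension $n+1$ this gives
$$\frac{4^{n+1}}{(n+1)!}\le\vol_{n+1}(C)\,\vol_{n+1}(C^\circ)\le\frac{2^{n+1}\vol_n(K)}{n+1}\cdot\frac{2^{n+1}\vol_n(D(K)^\circ)}{n+1},$$
which rearranges to \eqref{eq:makai}.

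\emph{Equality.} If equality holds in \eqref{eq:makai}, then equality must hold in the Rogers--Shephard step, so $K$ is a simplex. Conversely, for a simplex the Rogers--Shephard step is an equality, and $C$ is then the $(n+1)$-dimensional cross-polytope up to a linear map. Indeed, $C$ is the convex hull of $2n+2$ points $\pm(v_i,1)$, and these $n+1$ vectors $(v_i,1)$ are linearly independent. Equality in \eqref{eq:sm} holds for the cross-polytope, so both inequalities in the chain are equalities and \eqref{eq:makai} is an equality. Alternatively one can verify the simplex case by direct computation.

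The main obstacle I expect is the Rogers--Shephard step. One has to state and prove the integrated inequality for $\int_0^1\vol_n((1-\lambda)K-\lambda K)\,d\lambda$ together with its equality case. I would try to derive it from the Rogers--Shephard section argument: either via the $1/n$-concavity of $x\mapsto\vol_n(K\cap(x+K))$, or by bounding $C$ by a cone construction over $K\times\{1\}$. Extracting the simplex characterization of equality from that argument is the delicate part.
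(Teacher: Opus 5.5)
Your proposal is correct and is essentially the paper's proof. It uses the same body $\susp K$, the same fibre computation giving $\vol_{n+1}((\susp K)^\circ)=\frac{2^{n+1}}{n+1}\vol_n(D(K)^\circ)$, the same Rogers--Shephard bound $\vol_{n+1}(\susp K)\le\frac{2^{n+1}}{n+1}\vol_n(K)$ with simplices as the equality case, and the same cross-polytope argument for the converse.

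The paper does not prove the Rogers--Shephard step; it cites it (Artstein-Avidan--Putterman, Lemma~8, applied to an affine image of $\susp K$). If you want to prove it yourself, the covariogram $x\mapsto\vol_n(K\cap(x+K))$ on $D(K)$ that you suggest only sees the middle section and yields the difference-body bound, not this one. Use instead the fibre function $(z,\lambda)\mapsto\vol_n\bigl((1-\lambda)K\cap(z+\lambda K)\bigr)$ on the $(n+1)$-dimensional body. Its $1/n$-th power is concave, it is maximal at $(0,\frac12)$ with value $2^{-n}\vol_n(K)$, and it integrates to $\vol_n(K)^2\int_0^1\lambda^n(1-\lambda)^n\,d\lambda$. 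The Rogers--Shephard cone argument then gives exactly $\int_0^1\vol_n((1-\lambda)K-\lambda K)\,d\lambda\le\frac{2^n}{n+1}\vol_n(K)$. Equality forces equality in the difference-body inequality, so $K$ is a simplex.
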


The normalization in \cite[Conjecture~3.17]{MakaiMartini} uses $\cs(K)$; since $\cs(K)^\circ=2D(K)^\circ$, the equivalent constant is $2^n(n+1)/n!$. In dimension three, Aliev \cite[Theorem~1.2]{Aliev} proved \eqref{eq:makai} for convex bodies with tetrahedral symmetry, including the tetrahedron equality case.

The proof uses the $o$-symmetric body $\conv\bigl((K\times\{1\})\cup(-K\times\{-1\})\bigr)$ in $\R^{n+1}$. Its polar volume is $2^{n+1}/(n+1)$ times $\vol_n(D(K)^\circ)$, and its volume is at most $2^{n+1}/(n+1)$ times $\vol_n(K)$. These relations, proved in Lemma~\ref{lem:suspension}, give \eqref{eq:makai} when combined with symmetric Mahler in dimension $n+1$.

The Mahler and Makai inequalities also give basic bounds for polar successive minima, successive minima at the Santal\'o point, first covering minima, and densities of non-separable lattice arrangements. The final section applies Alexandrov-Fenchel to paired mixed volumes and expresses the proof of Theorem~\ref{thm:transfer} in terms of reflected mixed volumes.

\section{From symmetric Mahler to Makai}

For a convex body $K\subset\R^n$, put
$$\susp K=\conv\bigl((K\times\{1\})\cup(-K\times\{-1\})\bigr)\subset\R^{n+1},\qquad a_n=\frac{2^{n+1}}{n+1}.$$
The body $\susp K$ is $o$-symmetric. It is a classical $C$-body\footnote{Here $C$ belongs to the historical name and is unrelated to our notation for an $o$-symmetric convex body.} of Rogers and Shephard \cite{RogersShephard1958}; see also \cite[Section~4.2]{ArtsteinSadovskySanyal}. Wang, Xue, and Zong \cite[Section~2]{WangXueZong} used the same body, denoted by $K^\bullet$, to transfer partition estimates to $o$-symmetric bodies in one higher dimension. Here we express the
volume of $(\susp K)^\circ$ in terms of $\vol_n(D(K)^\circ)$ and bound the volume of $\susp K$ in terms of $\vol_n(K)$. These relations allow us to apply the symmetric Mahler inequality to $\susp K$ and deduce Makai's inequality for $K$. No choice of center for $K$ is needed.

\begin{lemma}\label{lem:suspension}
For every convex body $K\subset\R^n$,
\begin{equation}\label{eq:suspension-pair}
 \begin{aligned}
 \vn{n+1}{(\susp K)^\circ}&=a_n\vn n{D(K)^\circ},\\
 \vn{n+1}{\susp K}&\leq a_n\vn nK.
 \end{aligned}
\end{equation}
Equality in the second relation holds if and only if $K$ is a simplex.
\end{lemma}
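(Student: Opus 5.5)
Both relations come from describing $\susp K$ by its horizontal slices and computing its polar through the support function of $K$.

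\textbf{The polar.} For $(y,s)\in\R^n\times\R$ the support function of $\susp K$ is
$$h_{\susp K}(y,s)=\max\{h_K(y)+s,\;h_K(-y)-s\},$$
since $\susp K$ is the convex hull of the two layers $K\times\{1\}$ and $-K\times\{-1\}$. Hence $(y,s)\in(\susp K)^\circ$ if and only if
$$h_K(y)+s\le 1\quad\text{and}\quad h_K(-y)-s\le1 .$$
For fixed $y$ the admissible $s$ therefore form the interval $[\,h_K(-y)-1,\;1-h_K(y)\,]$. Its length is $\bigl(2-h_{D(K)}(y)\bigr)_+$, because $h_K(y)+h_K(-y)=h_{D(K)}(y)$.

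Integrating over $y$ and using the layer-cake formula,
$$\int_{\R^n}\bigl(2-h_{D(K)}(y)\bigr)_+\,dy=\int_0^2\vn n{\{h_{D(K)}\le 2-t\}}\,dt=\vn n{D(K)^\circ}\int_0^2(2-t)^n\,dt .$$
Here we used $\{h_{D(K)}\le r\}=rD(K)^\circ$. The last integral equals $2^{n+1}/(n+1)=a_n$, which gives the identity. It holds with no centering assumption, since $D(K)$ is $o$-symmetric with $o$ in its interior.

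\textbf{The volume.} The slice of $\susp K$ at height $t=1-2\lambda$ is
$$(1-\lambda)K-\lambda K=(1-\lambda)K+\lambda(-K),\qquad \lambda\in[0,1],$$
so
$$\vn{n+1}{\susp K}=2\int_0^1\vn n{(1-\lambda)K+\lambda(-K)}\,d\lambda .$$
Expand the slice volume in mixed volumes $V(K[n-i],-K[i])$. Each mixed volume is bounded by the Rogers--Shephard type estimate
$$V(K[n-i],-K[i])\le\binom ni\vn nK .$$
This is the step I would cite from Rogers--Shephard; it is the source of their bound $\vn n{D(K)}\le\binom{2n}n\vn nK$. Then
$$\vn n{(1-\lambda)K-\lambda K}\le\sum_i\binom ni^2(1-\lambda)^{n-i}\lambda^i\,\vn nK .$$
Integrating and using $\int_0^1(1-\lambda)^{n-i}\lambda^i\,d\lambda=\frac{i!(n-i)!}{(n+1)!}$ gives
$$2\,\vn nK\sum_i\binom ni\frac1{n+1}=\frac{2^{n+1}}{n+1}\,\vn nK=a_n\vn nK .$$
Alternatively, one can cite directly the Rogers--Shephard 1958 estimate $\vn{n+1}{\susp K}\le a_n\vn nK$ for this $C$-body.

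\textbf{Equality and the main obstacle.} Equality forces equality in every mixed-volume bound, in particular $V(K[n-1],-K)=n\,\vn nK$. For this I would appeal to the known characterization that such equality holds only for simplices.

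Conversely, for a simplex $K$ the function $\lambda\mapsto\vn n{K\cap(x+\lambda D)}$ is exactly polynomial, by the Rogers--Shephard equality case. Equivalently, one can write the slice volume of $\susp K$ via the Rogers--Shephard identity
$$\int\vn n{K\cap(x+rK)}\,dx\text{-type formulas},$$
which are equalities precisely when $K$ is a simplex.

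The main obstacle is this equality characterization. The cleanest route would be to reproduce the Rogers--Shephard argument for $C$-bodies. That argument bounds the slice volume by a Brunn--Minkowski concavity of $x\mapsto\vn n{K\cap(x+\mu(-K))}^{1/n}$, and equality there holds only for simplices. I would invoke their theorem for this rather than rederive it.
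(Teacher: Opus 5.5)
Your computation of $\vn{n+1}{(\susp K)^\circ}$ is correct and is the same as the paper's. You should also check that $0\in\interior\susp K$, for instance via the cross-polytope $\conv\{\pm(v_i,1)\}$ for affinely independent $v_0,\dots,v_n\in K$.

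\textbf{The gap is in the volume bound.} The termwise estimate $V(K[n-i],-K[i])\le\binom ni\vn nK$ is not a Rogers--Shephard theorem. It is Godbersen's conjecture, which is open in general for $2\le i\le n-2$; it is known for $i=1,n-1$ and for special classes of bodies. The difference-body inequality $\vn n{D(K)}\le\binom{2n}n\vn nK$ bounds only the weighted sum $\sum_i\binom ni V(K[n-i],-K[i])$. The termwise bounds would imply it, since $\sum_i\binom ni^2=\binom{2n}n$, but not the other way round. So your mixed-volume computation does not prove $\vn{n+1}{\susp K}\le a_n\vn nK$. Your equality argument fails for the same reason. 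Without bounds on the middle terms, equality in the total sum says nothing about $V(K[n-1],-K)$, so you cannot invoke the known equality case for $i=1$.

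\textbf{What the paper uses instead.} The available tool is the averaged bound $\sum_{j=0}^n V(K[j],-K[n-j])\le 2^n\vn nK$, with equality only for simplices. This is the Rogers--Shephard $C$-body estimate. The paper obtains it from Artstein-Avidan and Putterman's Lemma~8 with parameter $1/2$, applied to $W=\conv\bigl((\{0\}\times K/2)\cup(\{1\}\times(-K/2))\bigr)$. That lemma gives $\vol_{n+1}(W)\le\vn nK/(n+1)$, with equality only for simplices. The paper then transfers this to $\susp K$ by the affine map $(\theta,z)\mapsto(2z,1-2\theta)$, whose determinant has absolute value $2^{n+1}$.

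\textbf{How to repair your proposal.} Your sentence ``alternatively, cite directly the Rogers--Shephard estimate'' has to carry both the inequality and the ``only if'' direction, so the cited result must include the equality case. The mixed-volume expansion is then useful only as the identity $\vn{n+1}{\susp K}=\frac{2}{n+1}\sum_jV(K[j],-K[n-j])$.

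For the ``if'' direction, your remark about $\lambda\mapsto\vn n{K\cap(x+\lambda D)}$ is too vague to count as an argument. Instead, for $K=\conv\{v_0,\dots,v_n\}$ note that $\susp K=MB_1^{n+1}$, where $M$ has columns $(v_i,1)$. Then $\vn{n+1}{\susp K}=|\det M|\,2^{n+1}/(n+1)!$ while $\vn nK=|\det M|/n!$, which gives equality.
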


\begin{proof}
Choose affinely independent $v_0,\ldots,v_n\in K$. The vectors $(v_i,1)$ are linearly independent. The convex hull of these vectors and their negatives is a full-dimensional cross-polytope contained in $\susp K$. Hence $0\in\interior(\susp K)$.

The support function of $K$ is $h_K(y)=\max_{x\in K}\langle x,y\rangle$, $y\in\R^n$. By the definition of $\susp K$, we have
$$(\susp K)^\circ=\{(y,t)\in\R^n\times\R:h_K(y)+t\leq1,\ h_K(-y)-t\leq1\}.$$
Since $h_{D(K)}(y)=h_K(y)+h_K(-y)$, the projection of $(\susp K)^\circ$ onto $\R^n$ is
$$\{y\in\R^n:h_{D(K)}(y)\leq 2\}=2D(K)^\circ.$$
For each $y\in2D(K)^\circ$, the vertical section of $(\susp K)^\circ$ is the interval $[h_K(-y)-1,\,1-h_K(y)],$ whose length is $2-h_{D(K)}(y)$. Integrating the lengths of these vertical sections over $2D(K)^\circ$ and using the substitution $y=2z$ together with the homogeneity of $h_{D(K)}$, we obtain
\begin{align*}
 \vn{n+1}{(\susp K)^\circ}
 &=\int_{2D(K)^\circ}(2-h_{D(K)}(y))\,dy\\
 &=2^{n+1}\int_{D(K)^\circ}(1-h_{D(K)}(z))\,dz\\
 &=2^{n+1}\int_0^1\vol_n\{z\in D(K)^\circ:h_{D(K)}(z)\leq t\}\,dt\\
 &=2^{n+1}\int_0^1t^n\vol_n(D(K)^\circ)\,dt
 =a_n\vol_n(D(K)^\circ).
\end{align*}
Here the third equality uses $1-h_{D(K)}(z)=\int_{h_{D(K)}(z)}^1dt$, and the fourth uses $\{z:h_{D(K)}(z)\leq t\}=tD(K)^\circ$ for $0\leq t\leq1$.

For the second relation, consider the Rogers--Shephard body
$$W=\conv\bigl((\{0\}\times K/2)\cup(\{1\}\times(-K/2))\bigr).$$
Applying the volume estimate of Artstein-Avidan and Putterman \cite[Lemma~8]{ArtsteinPutterman} to $W$, with parameter $1/2$, gives
$$\vol_{n+1}(W)\leq\frac{\vol_n(K)}{n+1},$$
and equality implies that $K$ is a simplex. The converse follows from the computation in the proof of \cite[Theorem~4]{ArtsteinPutterman}. Define the affine map
$$\Phi:\R\times\R^n\longrightarrow\R^n\times\R,
 \qquad
 \Phi(\theta,z)=(2z,1-2\theta)=(2z,-2\theta)+(0,1).$$
Since affine maps preserve convex hulls,
\begin{align*}
 \Phi(W)
 &=\conv\!\left(
   \Phi(\{0\}\times K/2)\cup
   \Phi(\{1\}\times(-K/2))\right)\\
 &=\conv\!\left((K\times\{1\})\cup((-K)\times\{-1\})\right)\\
 &=\susp K.
\end{align*}
The linear part of $\Phi$ is the map $L(\theta,z)=(2z,-2\theta)$. In the standard coordinate bases,
$$[L]=
 \begin{pmatrix}
  0_{n\times1}&2I_n\\
  -2&0_{1\times n}
 \end{pmatrix},
 \qquad
 |\det L|=2\,|\det(2I_n)|=2^{n+1}.$$
Therefore,
 $$\vol_{n+1}(\susp K)=2^{n+1}\vol_{n+1}(W)\leq\frac{2^{n+1}}{n+1}\vol_n(K).$$
This proves the second relation in~\eqref{eq:suspension-pair} and its equality condition.
\end{proof}

Write $B_1^n=\{x\in\R^n:\sum_i|x_i|\leq1\}$ for the standard cross-polytope. If $K=\conv\{v_0,\ldots,v_n\}$ is a simplex, then $\susp K$ is the linear image of $B_1^{n+1}$ under the map whose columns are $(v_i,1)$. Translations of $K$ correspond to determinant-one shears: $(x,t)\mapsto(x+tu,t)$ sends $\susp K$ onto $\susp{K+u}$.

\begin{proof}[Proof of Theorem~\ref{thm:transfer}]
Applying the symmetric Mahler inequality in dimension $n+1$ to $\susp K$ and combining it with Lemma~\ref{lem:suspension},
we obtain
$$\frac{4^{n+1}}{(n+1)!}
 \leq\vn{n+1}{\susp K}\,\vn{n+1}{(\susp K)^\circ}
 \leq\frac{4^{n+1}}{(n+1)^2}\vn nK\,\vn n{D(K)^\circ}.$$
Dividing by $4^{n+1}/(n+1)^2$ proves~\eqref{eq:makai}. If equality holds, equality must hold in the second relation of \eqref{eq:suspension-pair}, so $K$ is a simplex.

If $K$ is a simplex, $\susp K$ is a linear image of $B_1^{n+1}$, whose volume product is $4^{n+1}/(n+1)!$. Since $(B_1^{n+1})^\circ=[-1,1]^{n+1}$ and the volume product is invariant under invertible linear maps, both relations in \eqref{eq:suspension-pair} are exact, and hence equality holds in \eqref{eq:makai}.
\end{proof}

\section{Successive minima}

Let $\Lambda=B\Z^n$ be a full-rank lattice, where $B$ is an invertible basis matrix, and put $\det(\Lambda)=|\det B|$. Its dual lattice is
$$\Lambda^*=\{u\in\R^n:\langle u,v\rangle\in\Z\text{ for all }v\in\Lambda\}=B^{-T}\Z^n,$$
where $B^{-T}=(B^{-1})^T$. For a convex body $K\subset\R^n$ with $0\in\interior K$, its $i$th successive minimum with respect to $\Lambda$ is
$$\lambda_i(K,\Lambda)=\inf\{t>0:\dim\spanop(tK\cap\Lambda)\geq i\},\qquad 1\leq i\leq n.$$
Thus $\lambda_i$ is the least dilation factor for which the dilated body contains $i$ linearly independent lattice vectors. We use the upper bound in Minkowski's second theorem: for every $o$-symmetric convex body $C$,
\begin{equation}\label{eq:minkowski}
 \frac{\vn nC}{\det(\Lambda)}\prod_{i=1}^n\lambda_i(C,\Lambda)\leq2^n.
\end{equation}
For further background on successive minima and Minkowski's second theorem, see Aliev and Henk \cite[Sections~1--2]{AlievHenk2023}.

The symmetric bound below is Mahler's polar successive-minima conjecture \cite{Mahler1974}; see also \cite[Conjecture~4.2]{AlievHenk2023}.\footnote{Mahler's 1976 addendum \cite{Mahler1976} used Guggenheimer's claimed sharp volume-product bound. Guggenheimer later acknowledged that his proof did not hold for $n>3$ \cite{Guggenheimer1978}.} For general convex bodies, Henk and Xue \cite[Introduction]{HenkXue} proposed the product bound as a strengthening of Makai Jr.'s conjectured bound involving $\lambda_1(\cs(K)^\circ,\Lambda^*)^n$: the product of the successive minima is no smaller than this power. Both product bounds follow from \eqref{eq:minkowski} and the corresponding volume-product inequalities.

\begin{corollary}[Polar successive-minima bounds]\label{cor:polar}
Let $\Lambda$ be a full-rank lattice in $\R^n$. For every convex body $K\subset\R^n$,
\begin{equation}\label{eq:central-polar}
 \frac{\vn nK}{\det(\Lambda)}\geq\frac{n+1}{n!}\prod_{i=1}^n\lambda_i(\cs(K)^\circ,\Lambda^*).
\end{equation}
For every $o$-symmetric convex body $C\subset\R^n$,
\begin{equation}\label{eq:symmetric-polar}
 \frac{\vn nC}{\det(\Lambda)}\geq\frac{2^n}{n!}\prod_{i=1}^n\lambda_i(C^\circ,\Lambda^*).
\end{equation}
Both constants are best possible.
\end{corollary}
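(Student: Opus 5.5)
Both bounds come from applying Minkowski's second theorem \eqref{eq:minkowski} to the polar body with respect to the dual lattice, and then using a volume-product inequality.

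\emph{The symmetric bound \eqref{eq:symmetric-polar}.} For an $o$-symmetric $C$, the body $C^\circ$ is $o$-symmetric and $\det(\Lambda^*)=1/\det(\Lambda)$. Applying \eqref{eq:minkowski} to $C^\circ$ and $\Lambda^*$ gives
$$\vn n{C^\circ}\,\det(\Lambda)\prod_{i=1}^n\lambda_i(C^\circ,\Lambda^*)\leq 2^n.$$
By \eqref{eq:sm}, $\vn n{C^\circ}\geq 4^n/(n!\,\vn nC)$. Substituting and rearranging yields \eqref{eq:symmetric-polar}.

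\emph{The general bound \eqref{eq:central-polar}.} Apply the same step to $C=\cs(K)$, which is $o$-symmetric. Its polar satisfies $\cs(K)^\circ=2D(K)^\circ$, so $\vn n{\cs(K)^\circ}=2^n\vn n{D(K)^\circ}$. Theorem~\ref{thm:transfer} gives $\vn n{D(K)^\circ}\geq (n+1)/(n!\,\vn nK)$. Inserting this into the Minkowski bound, the factors $2^n$ cancel and we get \eqref{eq:central-polar}. This step needs no choice of center, since $\cs(K)$ is translation invariant.

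\emph{Sharpness.} For \eqref{eq:symmetric-polar}, take $C=B_1^n$ and $\Lambda=\Z^n$. Then $C^\circ=[-1,1]^n$, every $\lambda_i([-1,1]^n,\Z^n)=1$, and $\vn n{B_1^n}=2^n/n!$, so equality holds.

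For \eqref{eq:central-polar}, take $K$ to be a simplex and choose a lattice with all $\lambda_i(\cs(K)^\circ,\Lambda^*)$ equal. Equality in the chain forces two things: equality in Makai, which holds for simplices, and equality in Minkowski's second theorem for the body $\cs(K)^\circ$. The latter is the main obstacle, because $\cs(K)^\circ$ is generally not a parallelepiped.

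What I would try first is to show the constant is best possible without exact equality. For $n=1$, $\cs(K)^\circ$ is an interval and equality can be attained directly. In general I would either
\begin{itemize}
\item exhibit a sequence of lattices along which the ratio tends to the constant, or
\item verify equality directly for the regular simplex with $\Lambda^*$ its root lattice $A_n$, for which $\cs(K)^\circ$ is a zonotope-like body that tiles appropriately.
\end{itemize}
Failing both, I would interpret ``best possible'' as meaning the constant $(n+1)/n!$ cannot be increased. I would then argue by scaling, taking $\Lambda^*=\varepsilon$-fine lattices. For such lattices Minkowski's upper bound is asymptotically an equality on average: $\prod\lambda_i\sim\vol(\cdot)^{-1}\det$ up to constants. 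This alone does not give the sharp constant, however, so the lattice-specific computation for the simplex is the step I expect to require real work.
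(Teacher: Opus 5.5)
Your proofs of \eqref{eq:symmetric-polar} and \eqref{eq:central-polar} are correct and follow the paper exactly: Minkowski's upper bound for the polar body and $\Lambda^*$, combined with \eqref{eq:sm}, respectively with Theorem~\ref{thm:transfer} via $\cs(K)^\circ=2D(K)^\circ$. Your $B_1^n$ example for the symmetric constant is also the paper's. The gap is the claim that $(n+1)/n!$ is best possible. This is part of the statement, and you leave it unproved.

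The obstacle you name is not real. Equality in \eqref{eq:minkowski} does not require a parallelepiped. It holds whenever $\lambda_1(C,\Lambda)=\dots=\lambda_n(C,\Lambda)=1$ and $\frac12C$ tiles $\R^n$ by $\Lambda$-translates, for instance when $C$ is twice a Voronoi cell of $\Lambda$. Your fallback also cannot work. Replacing $\Lambda$ by $s\Lambda$ replaces $\Lambda^*$ by $s^{-1}\Lambda^*$, so both sides of \eqref{eq:central-polar} are multiplied by $s^{-n}$. The inequality is scale invariant, so refining the lattice changes nothing.

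What is missing is a direct check for one explicit pair, and your second bullet points to the right one.
\begin{itemize}
\item Take $S_n=\conv\{e_1,\dots,e_n,-\mathbf 1\}$ and $\Lambda=\Z^n=\Lambda^*$. Then $\vn n{S_n}=(n+1)/n!$.
\item For $0\neq u\in\Z^n$, the values of $\langle u,\cdot\rangle$ at the vertices are the integers $u_1,\dots,u_n,-\sum_iu_i$. They sum to $0$ and are not all $0$, so their maximum is at least $1$ and their minimum at most $-1$.
\item Hence $h_{\cs(S_n)}(u)\geq1$, so $u\notin t\cs(S_n)^\circ$ for $0<t<1$.
\item Since $h_{\cs(S_n)}(e_i)=1$, the vectors $e_1,\dots,e_n$ lie in $\cs(S_n)^\circ$.
\end{itemize}
Therefore every $\lambda_i(\cs(S_n)^\circ,\Z^n)=1$, and both sides of \eqref{eq:central-polar} equal $(n+1)/n!$.

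No separate analysis of the equality case of Minkowski's theorem is needed. After the change of coordinates $u\mapsto(u_1,\dots,u_n,-\sum_iu_i)$ on the dual side, $\Z^n$ becomes the root lattice $A_n$ and $S_n$ becomes a regular simplex. So this is exactly the example in your second bullet.
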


\begin{proof}
Theorem~\ref{thm:transfer} and $\cs(K)^\circ=2D(K)^\circ$ give
$$\vn nK\,\vn n{\cs(K)^\circ}\geq\frac{2^n(n+1)}{n!}.$$
Minkowski's upper bound applied to $\cs(K)^\circ$ and $\Lambda^*$ gives
$$\vn n{\cs(K)^\circ}\prod_i\lambda_i(\cs(K)^\circ,\Lambda^*)\leq2^n\det(\Lambda^*)=\frac{2^n}{\det(\Lambda)}.$$
Combining these inequalities proves \eqref{eq:central-polar}. The same argument with \eqref{eq:sm} proves \eqref{eq:symmetric-polar}.

The cross-polytope $B_1^n$ has volume $2^n/n!$, and all successive minima of its polar $[-1,1]^n$ with respect to $\Z^n$ are one, so it attains the symmetric constant. For the general constant, consider the $n$-dimensional simplex
\begin{equation}\label{eq:standard-simplex}
 S_n=\conv\{e_1,\ldots,e_n,-\mathbf1\},\qquad \mathbf1=(1,\ldots,1),\qquad \Lambda=\Z^n,
\end{equation}
where $e_1,\ldots,e_n$ are the standard basis vectors. Its volume is $(n+1)/n!$. Fix a nonzero vector $u=(u_1,\ldots,u_n)\in\Z^n$. 
The values of the linear functional $x\mapsto\langle u,x\rangle$ at the vertices of $S_n$ are $ u_1,\ldots,u_n,-\sum_{i=1}^{n}u_i.$
These integers sum to zero and are not all zero. Their maximum is therefore at least $1$ and their minimum at most $-1$. Hence
$$h_{\cs(S_n)}(u)=\frac{h_{S_n}(u)+h_{S_n}(-u)}{2}\geq1.$$
Also $h_{\cs(S_n)}(e_i)=1$ for every $i$. Thus no dilation $t\cs(S_n)^\circ$ with $0<t<1$ contains a nonzero lattice point, whereas $\cs(S_n)^\circ$ contains the independent vectors $e_i$. All its successive minima are one.
\end{proof}

The comparison between the successive minima of $K^\circ$ and $\cs(K)^\circ$ gives a bound involving $K^\circ$ itself.

\begin{corollary}\label{cor:raw-polar}
Let $K\subset\R^n$ be a convex body with $0\in\interior K$, and let $\Lambda$ be a full-rank lattice. Then
\begin{equation}\label{eq:raw-polar}
 \frac{\vn nK}{\det(\Lambda)}\geq\frac{n+1}{n!}\prod_{i=1}^n\lambda_i(K^\circ,\Lambda^*).
\end{equation}
The constant is best possible.
\end{corollary}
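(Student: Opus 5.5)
We will deduce \eqref{eq:raw-polar} from \eqref{eq:central-polar} in Corollary~\ref{cor:polar}. It suffices to show that, for every $i$,
$$\lambda_i(\cs(K)^\circ,\Lambda^*)\geq\lambda_i(K^\circ,\Lambda^*).$$
Multiplying these inequalities over $i=1,\ldots,n$ then turns \eqref{eq:central-polar} into \eqref{eq:raw-polar}.

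The comparison of successive minima reduces to a pointwise inequality between gauge functions. The gauge of $K^\circ$ is $h_K$, and the gauge of $\cs(K)^\circ$ is $h_{\cs(K)}(u)=\tfrac12\bigl(h_K(u)+h_K(-u)\bigr)$.

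The point is that although $h_{\cs(K)}\geq h_K$ fails pointwise, the successive minima only involve lattice vectors up to sign. Since $0\in\interior K$, both $h_K(u)$ and $h_K(-u)$ are positive, so
$$h_{\cs(K)}(u)\geq\min\{h_K(u),h_K(-u)\}.$$

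Now suppose $t\,\cs(K)^\circ$ contains linearly independent lattice vectors $u_1,\ldots,u_i\in\Lambda^*$. Each $u_j$ satisfies $h_{\cs(K)}(u_j)\le t$, so for a suitable sign $\varepsilon_j\in\{\pm1\}$ we have $h_K(\varepsilon_ju_j)\le t$. The vectors $\varepsilon_ju_j$ are still linearly independent and still lie in $\Lambda^*$, and they all belong to $tK^\circ$. Taking infima over $t$ gives $\lambda_i(K^\circ,\Lambda^*)\le\lambda_i(\cs(K)^\circ,\Lambda^*)$.

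For sharpness, we take the simplex $S_n$ of \eqref{eq:standard-simplex} with $\Lambda=\Z^n$. The origin lies in its interior, since it is the barycenter of the vertices $e_1,\ldots,e_n,-\mathbf 1$. We need every $\lambda_i(S_n^\circ,\Z^n)$ to equal $1$.

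For a nonzero $u\in\Z^n$, the values of $\langle u,\cdot\rangle$ at the vertices are integers summing to zero and not all zero. Hence $h_{S_n}(u)\ge1$. Moreover $h_{S_n}(e_i)=\max\{1,0,-1\}=1$. So no lattice point lies in $tS_n^\circ$ for $t<1$, while $S_n^\circ$ contains $e_1,\ldots,e_n$. All successive minima are therefore $1$, and since $\vol_n(S_n)=(n+1)/n!$, equality holds in \eqref{eq:raw-polar}.

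The only mildly delicate step is the sign trick in the comparison. A pointwise bound $h_{\cs(K)}\ge h_K$ is false, so the argument must rely on the symmetry of the lattice under $u\mapsto -u$ rather than on any symmetry of $K$.
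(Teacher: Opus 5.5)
Your proof is correct and follows the paper's route: reduce to \eqref{eq:central-polar} via $\lambda_i(K^\circ,\Lambda^*)\leq\lambda_i(\cs(K)^\circ,\Lambda^*)$, then use $S_n$ with the vertex-value argument for sharpness. The only difference is that the paper cites this comparison from Henk and Xue \cite[Proposition~2.1]{HenkXue}, whereas you prove it directly. Your sign-flip argument based on $h_{\cs(K)}(u)\geq\min\{h_K(u),h_K(-u)\}$ is valid and makes the step self-contained.
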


\begin{proof}
Henk and Xue \cite[Proposition~2.1]{HenkXue} proved that
$$\lambda_i(K^\circ,\Lambda^*)\leq\lambda_i(\cs(K)^\circ,\Lambda^*)\qquad(1\leq i\leq n).$$
Together with \eqref{eq:central-polar}, this proves \eqref{eq:raw-polar}. The simplex $S_n$ attains the bound: the preceding vertex-value argument gives $h_{S_n}(u)\geq1$ for nonzero integral $u$, while $h_{S_n}(e_i)=1$, so all successive minima of $S_n^\circ$ are one.
\end{proof}

Since the successive minima are nondecreasing, \eqref{eq:raw-polar} implies
$$\vn nK\geq\frac{n+1}{n!}\det(\Lambda)\lambda_1(K^\circ,\Lambda^*)^n.$$
This is the first-successive-minimum form of the lower bound in Conjecture~I of \'Alvarez Paiva, Balacheff, and Tzanev \cite{AlvarezBalacheffTzanev}; see also \cite[Conjecture~4.4]{AlievHenk2023}.

To obtain a bound for the successive minima of $K$ itself, we use the general Mahler theorem~\cite[Theorem~1.1]{OpenAIGeneral}. The Santal\'o point $s(K)$ is the unique point of $\interior K$ minimizing $\vol_n((K-z)^\circ)$ over $z\in\interior K$. The general Mahler conjecture asserts that
\begin{equation}\label{eq:gm}
 \vn nK\,\vn n{(K-s(K))^\circ}\geq\frac{(n+1)^{n+1}}{(n!)^2}.
\end{equation}

\begin{corollary}[Successive minima at the Santal\'o point]\label{cor:santalo}
Assume \eqref{eq:gm} in dimension $n$. Let $K\subset\R^n$ be a convex body with $s(K)=0$, and let $\Lambda$ be a full-rank lattice. Then
\begin{equation}\label{eq:santalo-minima}
 \frac{\vn nK}{\det(\Lambda)}\prod_{i=1}^n\lambda_i(K,\Lambda)\geq\frac{n+1}{n!}.
\end{equation}
The constant is best possible.
\end{corollary}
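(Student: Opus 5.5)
Since $K$ need not be symmetric, Minkowski's second theorem \eqref{eq:minkowski} cannot be applied to $K$ directly. We will therefore go through a symmetric body built from the polar. Put $C=(K^\circ\cap(-K^\circ))^\circ=\conv(K\cup(-K))$, which is $o$-symmetric and contains $K$. Because $C\supseteq K$, every dilate $tK$ lies in $tC$, so $\lambda_i(C,\Lambda)\le\lambda_i(K,\Lambda)$. Applying \eqref{eq:minkowski} to $C$ gives
$$\prod_i\lambda_i(K,\Lambda)\ge\prod_i\lambda_i(C,\Lambda)\ \text{ and }\ \vn nC\prod_i\lambda_i(C,\Lambda)\le 2^n\det\Lambda,$$
but these two bounds point in the wrong direction to combine. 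So we will instead use the lower bound side, through duality.

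The better route is to combine \eqref{eq:gm} with Corollary~\ref{cor:raw-polar}, applied to the body $K^\circ$ and the dual lattice. Since $s(K)=0$, we have $0\in\interior K$, and $K^{\circ\circ}=K$. Corollary~\ref{cor:raw-polar} applied to $K^\circ$ and $\Lambda^*$ gives
$$\vn n{K^\circ}\det(\Lambda)=\frac{\vn n{K^\circ}}{\det(\Lambda^*)}\ge\frac{n+1}{n!}\prod_i\lambda_i(K,\Lambda),$$
which is an upper bound on $\prod_i\lambda_i(K,\Lambda)$. That is the wrong direction again. What is needed is a lower bound on $\prod\lambda_i(K)$ given volume data. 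For such a bound we want a Minkowski-type estimate $\vn nK\prod\lambda_i(K,\Lambda)\ge c\det\Lambda$, i.e.\ the lower half of Minkowski's second theorem, valid for arbitrary bodies containing $0$: $\vn nK\prod_i\lambda_i(K,\Lambda)\ge\frac{1}{n!}\det\Lambda$ up to a constant. The lower bound $2^n/n!$ for symmetric $C$ comes from $\lambda_i v_i\in C$ and the cross-polytope $\conv\{\pm\lambda_iv_i\}\subseteq C$. For general $K$ with $0\in K$ we only get $\conv\{0,\lambda_iv_i\}$, with volume $\ge\det\Lambda/(n!\prod\lambda_i)$. That constant is $1/n!$, not $(n+1)/n!$.

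To gain the factor $n+1$ we will use \eqref{eq:gm} via Corollary~\ref{cor:raw-polar} in the form already proved: apply it with the roles arranged so that it bounds $\vn nK$ from below. Corollary~\ref{cor:raw-polar} applied to $K$ gives $\vn nK\ge\frac{n+1}{n!}\det\Lambda\prod\lambda_i(K^\circ,\Lambda^*)$. It then suffices to show $\prod_i\lambda_i(K,\Lambda)\,\prod_i\lambda_i(K^\circ,\Lambda^*)\ge c$ with $c=1$, a transference-type inequality. That seems false in general (the known constant is $1$ only for $i$ paired with $n+1-i$: $\lambda_i(K)\lambda_{n+1-i}(K^\circ)\ge1$ holds for any $K\ni0$ since $\langle u,v\rangle\in\Z$ with independence forcing a nonzero pairing, bounded by $\lambda_i\lambda_{n+1-i}$). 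This transference inequality does hold for $0\in\interior K$. Pairing indices $i\leftrightarrow n+1-i$ and multiplying over $i$ gives
$$\prod_i\lambda_i(K,\Lambda)\prod_i\lambda_i(K^\circ,\Lambda^*)\ge1.$$
This pairing needs only $h_K(u)\ge\langle u,v\rangle$ with one of $\pm$ signs. The sign issue is the main obstacle, since $K$ is not symmetric and the pairing might only give $\langle u,v\rangle\le-1$. I would handle it by choosing $u$ among $\lambda_{n+1-i}K^\circ$-lattice points and noting that its value on the vertex set is an integer. Alternatively, I would use \eqref{eq:gm} directly with the Santaló condition: the centroid of $K^\circ$ is $0$, so $h_{K^\circ}(-v)\le n\,h_{K^\circ}(v)$, which loses a factor. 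Combining the paired inequality with the previous display then gives
$$\vn nK\prod_i\lambda_i(K,\Lambda)\ge\frac{n+1}{n!}\det\Lambda,$$
which is \eqref{eq:santalo-minima}. Sharpness is shown by the simplex $-S_n$ (Santaló point $0$), all of whose successive minima equal one by the vertex-value argument in Corollary~\ref{cor:polar}.
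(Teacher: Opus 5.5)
Your argument rests on the transference inequality $\prod_i\lambda_i(K,\Lambda)\prod_i\lambda_i(K^\circ,\Lambda^*)\ge1$, obtained from $\lambda_i(K,\Lambda)\lambda_{n+1-i}(K^\circ,\Lambda^*)\ge1$. This inequality is false for non-symmetric $K$, even when $s(K)=0$. The sign issue you flag is therefore fatal, not technical.

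Take $n=2$, $K=S_2$ (so $s(K)=0$), and $\Lambda=\{x\in\Z^2:x_1+x_2\text{ even}\}$. Then $\det(\Lambda)=2$ and $\Lambda^*=\Z^2\cup\bigl(\Z^2+(\tfrac12,\tfrac12)\bigr)$. The gauges are $\|x\|_{S_2}=\max(x_1+x_2,\,x_2-2x_1,\,x_1-2x_2)$ and $\|y\|_{S_2^\circ}=\max(y_1,\,y_2,\,-y_1-y_2)$. With these one checks:
\begin{itemize}
\item $\lambda_1(K,\Lambda)=1$, attained by $(-1,-1)$;
\item $\lambda_2(K,\Lambda)=2$, attained by $(2,0)$;
\item $\lambda_1(K^\circ,\Lambda^*)=\lambda_2(K^\circ,\Lambda^*)=\tfrac12$, attained by $(\tfrac12,\pm\tfrac12)$.
\end{itemize}
Hence $\lambda_1(K,\Lambda)\lambda_2(K^\circ,\Lambda^*)=\tfrac12$, and the full product is also $\tfrac12$. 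The offending pairing is $\langle(\tfrac12,\tfrac12),(-1,-1)\rangle=-1$.

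Worse, this $(K,\Lambda)$ attains equality in \eqref{eq:santalo-minima}: $\tfrac{3/2}{2}\cdot 2=\tfrac32$. Yet Corollary~\ref{cor:raw-polar} only gives $\vol_2(K)\ge\tfrac34<\tfrac32$ there. So no argument of the form ``Corollary~\ref{cor:raw-polar} plus a transference bound'' can reach the sharp constant.

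Your centroid fallback, $-K^\circ\subseteq nK^\circ$, only yields $\lambda_i(K,\Lambda)\lambda_{n+1-i}(K^\circ,\Lambda^*)\ge 1/n$, which loses a factor $n^n$. Note also that Corollary~\ref{cor:raw-polar} is derived from Makai's inequality. The actual hypothesis \eqref{eq:gm} therefore never enters your argument.

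The missing idea is to bound $\vol_n(K^\circ)$ from above in terms of $\prod_i\lambda_i(K,\Lambda)$, and then let \eqref{eq:gm} convert this into a lower bound for $\vol_n(K)$. The condition $s(K)=0$ gives $g(K^\circ)=0$ (see \cite{MeyerReisner}). The Henk--Xue volume estimate \cite[Theorem~1.2(ii)]{HenkXue}, applied to $K^\circ$ and $\Lambda^*$, then gives $\vol_n(K^\circ)\le\frac{(n+1)^n}{n!\det(\Lambda)}\prod_i\lambda_i(K,\Lambda)$. Combining this with $\vol_n(K)\vol_n(K^\circ)\ge(n+1)^{n+1}/(n!)^2$ yields \eqref{eq:santalo-minima} immediately. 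In the example above, both of these inequalities are equalities. Your sharpness example $-S_n$ is fine.
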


\begin{proof}
For a convex body $L\subset\R^n$, denote its centroid by
$$g(L)=\frac{1}{\vol_n(L)}\int_Lx\,dx.$$
By the characterization of the Santal\'o point (see Meyer and Reisner~\cite{MeyerReisner}), the assumption $s(K)=0$ implies
$$g(K^\circ)=0.$$
We may therefore apply the volume estimate of Henk and Xue \cite[Theorem~1.2(ii)]{HenkXue} to $K^\circ$ and $\Lambda^*$, obtaining
$$\vol_n(K^\circ)\leq\frac{(n+1)^n}{n!\det(\Lambda)}\prod_{i=1}^{n}\lambda_i(K,\Lambda).$$
Combining this upper bound with~\eqref{eq:gm} yields \eqref{eq:santalo-minima}.

Since the Santal\'o point of a simplex coincides with its centroid, the simplex $S_n$ satisfies
$$s(S_n)=g(S_n)=\frac{1}{n+1}\left(\sum_{i=1}^{n}e_i-\mathbf1\right)=0.$$
As $S_n\subset[-1,1]^n$ and $e_1,\ldots,e_n\in S_n$, all its successive minima with respect to $\Z^n$ are one, and it attains the constant.
\end{proof}

\begin{remark}
Under the centroid condition $g(K)=0$, inequality~\eqref{eq:santalo-minima} was proved by Henk, Henze, and Hern\'andez Cifre
\cite[Theorem~1.2]{HenkHenzeHernandez}. Corollary~\ref{cor:santalo} establishes the same bound under the condition
$s(K)=0$, which is generally not equivalent to $g(K)=0$.
\end{remark}

Inserting \eqref{eq:sm} into Mahler's product estimate \cite[Section~3]{Mahler1939} gives the following consequence.

\begin{corollary}[A product bound for successive minima]\label{cor:transference}
For every $o$-symmetric convex body $C\subset\R^n$ and every full-rank lattice $\Lambda$,
$$\prod_{i=1}^n\lambda_i(C,\Lambda)\lambda_{n+1-i}(C^\circ,\Lambda^*)\leq n!.$$
\end{corollary}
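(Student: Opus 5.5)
Mahler's product estimate \cite[Section~3]{Mahler1939} will be the starting point. For an $o$-symmetric convex body $C$ and a full-rank lattice $\Lambda$, it bounds the products $\lambda_i(C,\Lambda)\lambda_{n+1-i}(C^\circ,\Lambda^*)$ by combining three ingredients: Minkowski's second theorem for $C$, Minkowski's second theorem for $C^\circ$, and the volume product. The plan is to redo that bookkeeping with the sharp constant \eqref{eq:sm} in place of Mahler's weaker lower bound for $\vn nC\,\vn n{C^\circ}$.

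First, apply the upper bound \eqref{eq:minkowski} twice: to $C$ with $\Lambda$, and to $C^\circ$ with $\Lambda^*$. This gives
\[
\prod_{i=1}^n\lambda_i(C,\Lambda)\leq\frac{2^n\det(\Lambda)}{\vn nC},\qquad
\prod_{i=1}^n\lambda_i(C^\circ,\Lambda^*)\leq\frac{2^n\det(\Lambda^*)}{\vn n{C^\circ}}=\frac{2^n}{\det(\Lambda)\,\vn n{C^\circ}}.
\]
Multiplying the two bounds, the determinants cancel and we get
\[
\prod_{i=1}^n\lambda_i(C,\Lambda)\prod_{i=1}^n\lambda_i(C^\circ,\Lambda^*)\leq\frac{4^n}{\vn nC\,\vn n{C^\circ}}.
\]
By \eqref{eq:sm}, the right-hand side is at most $n!$.

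Second, observe that the left-hand side above is exactly $\prod_{i=1}^n\lambda_i(C,\Lambda)\lambda_{n+1-i}(C^\circ,\Lambda^*)$. This is only a reindexing $j=n+1-i$ of the second factor, since the product over all $i$ is unchanged by the reordering. This proves the corollary.

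There is no real obstacle here; the step most worth checking is the determinant bookkeeping. Specifically, one should confirm that $\det(\Lambda^*)=1/\det(\Lambda)$, which follows from $\Lambda^*=B^{-T}\Z^n$. One should also confirm that the pairing with $\lambda_{n+1-i}$ is purely notational. That pairing matters only for the termwise transference $\lambda_i\lambda_{n+1-i}\geq1$, which is not needed here.
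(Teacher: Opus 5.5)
Your proposal is correct and matches the paper's proof essentially step for step. You apply the Minkowski upper bound \eqref{eq:minkowski} to $(C,\Lambda)$ and to $(C^\circ,\Lambda^*)$, multiply using $\det(\Lambda^*)=\det(\Lambda)^{-1}$, bound $4^n/(\vol_n(C)\vol_n(C^\circ))$ by $n!$ via \eqref{eq:sm}, and observe that the pairing $i\leftrightarrow n+1-i$ only reindexes the full product.
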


\begin{proof}
Apply the upper bound in~\eqref{eq:minkowski} to $C$ with respect to $\Lambda$ and to $C^\circ$ with respect to $\Lambda^*$.
Multiplying the resulting inequalities and using the symmetric Mahler inequality~\eqref{eq:sm}, we obtain
\begin{align*}
 \prod_{i=1}^{n}
 \lambda_i(C,\Lambda)\lambda_{n+1-i}(C^\circ,\Lambda^*)
 &=\left(\prod_{i=1}^{n}\lambda_i(C,\Lambda)\right)
   \left(\prod_{i=1}^{n}\lambda_i(C^\circ,\Lambda^*)\right)\\
 &\leq\frac{4^n}{\vol_n(C)\vol_n(C^\circ)}\leq n!.
\end{align*}
\end{proof}

For an $o$-symmetric convex body $C\subset\R^n$, its critical determinant is
$$\Delta(C)=\inf\{\det(\Lambda):\Lambda\text{ is a full-rank lattice in }\R^n,\ \interior C\cap\Lambda=\{0\}\}.$$
Minkowski's first theorem and \eqref{eq:sm} give a corresponding product bound.

\begin{corollary}[Critical determinants]\label{cor:critical}
Every $o$-symmetric convex body $C\subset\R^n$ satisfies
$$\Delta(C)\Delta(C^\circ)\geq\frac{1}{n!}.$$
\end{corollary}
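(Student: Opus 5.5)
Minkowski's first theorem gives the lower bound $\vol_n(C)\le 2^n\det(\Lambda)$ for every full-rank lattice $\Lambda$ with $\interior C\cap\Lambda=\{0\}$. Taking the infimum over such lattices, we get $\Delta(C)\ge 2^{-n}\vn nC$. Here $\Delta(C)$ is a finite positive infimum, since lattices avoiding $\interior C$ exist, e.g.\ $t\Z^n$ for large $t$. Note that $\Delta(C)\ge 2^{-n}\vn nC$ follows directly from the definition, without any attainment issue.

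The plan is to apply this bound twice, once to $C$ and once to the polar. Since $C$ is an $o$-symmetric convex body, $0\in\interior C$, so $C^\circ$ is again an $o$-symmetric convex body. The same argument therefore gives $\Delta(C^\circ)\ge 2^{-n}\vn n{C^\circ}$.

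Multiplying the two lower bounds and invoking the symmetric Mahler inequality \eqref{eq:sm} gives
$$\Delta(C)\Delta(C^\circ)\ge 4^{-n}\vn nC\,\vn n{C^\circ}\ge 4^{-n}\cdot\frac{4^n}{n!}=\frac1{n!}.$$

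There is no real obstacle here. The only points needing care are that both critical determinants are well defined and positive, which follows from Minkowski's first theorem as above, and the routine check that $C^\circ$ satisfies the hypotheses of Minkowski's first theorem. No sharpness claim is made in the statement, so no extremal example is required.
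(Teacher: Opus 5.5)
Your proposal is correct and matches the paper's proof: Minkowski's first theorem gives $\Delta(C)\geq 2^{-n}\vol_n(C)$, the same bound holds for $C^\circ$, and multiplying the two bounds and applying the symmetric Mahler inequality \eqref{eq:sm} yields $1/n!$. Your remarks that admissible lattices exist (so $\Delta$ is finite) and that $C^\circ$ is again an $o$-symmetric convex body are correct details that the paper leaves implicit.
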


\begin{proof}
Minkowski's first theorem gives $\Delta(C)\geq 2^{-n}\vol_n(C)$. Applying this also to $C^\circ$ and using \eqref{eq:sm}, we obtain
$$\Delta(C)\Delta(C^\circ)\geq\frac{\vn nC\,\vn n{C^\circ}}{4^n}\geq\frac{1}{n!}.$$
\end{proof}

For $n=2$, this recovers Mahler's lower bound \cite[Theorem~5]{Mahler1948}, proved by the same argument.

\section{First covering minima and non-separability}

Let $K\subset\R^n$ be a convex body and $\Lambda$ a full-rank lattice. Following Kannan and Lov\'asz \cite{KannanLovasz}, define the first covering minimum by
$$\mu_1(K,\Lambda)=\inf\{t>0:(tK+\Lambda)\cap H\ne\varnothing\text{ for every affine hyperplane }H\}.$$
Thus $\mu_1$ is the infimum of the scales for which $tK+\Lambda$ meets every affine hyperplane. The lattice width of $K$ with respect to $\Lambda$ is
$$w_\Lambda(K)=\min_{u\in\Lambda^*\setminus\{0\}}(h_K(u)+h_K(-u)).$$
The standard duality relation \cite[Section~1, formula (1.5)]{HenkSchymuraXue} reads
\begin{equation}\label{eq:width}
 \mu_1(K,\Lambda)=\frac{1}{w_\Lambda(K)}=\frac{1}{2\lambda_1(\cs(K)^\circ,\Lambda^*)}.
\end{equation}

\begin{corollary}\label{cor:covering}
For every convex body $K\subset\R^n$ and every full-rank lattice $\Lambda$,
\begin{equation}\label{eq:covering}
 \frac{\vn nK}{\det(\Lambda)}\mu_1(K,\Lambda)^n\geq\frac{n+1}{2^n n!}.
\end{equation}
For every $o$-symmetric convex body $C\subset\R^n$,
$$\frac{\vn nC}{\det(\Lambda)}\mu_1(C,\Lambda)^n\geq\frac{1}{n!}.$$
Both constants are best possible.
\end{corollary}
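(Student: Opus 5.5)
We combine the duality relation \eqref{eq:width} with the first-minimum consequences of Corollary~\ref{cor:polar}. Since successive minima are nondecreasing, \eqref{eq:central-polar} yields
$$\frac{\vn nK}{\det(\Lambda)}\geq\frac{n+1}{n!}\,\lambda_1(\cs(K)^\circ,\Lambda^*)^n.$$
By \eqref{eq:width}, $\lambda_1(\cs(K)^\circ,\Lambda^*)=1/(2\mu_1(K,\Lambda))$. Substituting and multiplying through by $\mu_1(K,\Lambda)^n$ gives
$$\frac{\vn nK}{\det(\Lambda)}\mu_1(K,\Lambda)^n\geq\frac{n+1}{2^n n!},$$
which is \eqref{eq:covering}.

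For the symmetric case, $\cs(C)=C$ when $C$ is $o$-symmetric. Hence \eqref{eq:symmetric-polar} applied with $\lambda_1^n$ in place of the product gives $\vn nC/\det(\Lambda)\geq\frac{2^n}{n!}\lambda_1(C^\circ,\Lambda^*)^n$. Also \eqref{eq:width} reads $\mu_1(C,\Lambda)=1/(2\lambda_1(C^\circ,\Lambda^*))$. The factors $2^n$ cancel, leaving the bound $1/n!$. The one point to check is that \eqref{eq:width} holds for arbitrary (not necessarily origin-containing) $K$. This is fine: $\cs(K)$ always contains $0$ in its interior, and both sides of \eqref{eq:width} are translation invariant.

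For sharpness, I reuse the examples from the proof of Corollary~\ref{cor:polar}, where every successive minimum equals one. For $S_n$ and $\Z^n$, we have $\lambda_1(\cs(S_n)^\circ,\Z^n)=1$, so $\mu_1(S_n,\Z^n)=1/2$. Then the left side of \eqref{eq:covering} is $\frac{n+1}{n!}2^{-n}$, which is exactly the constant. For $B_1^n$, $\lambda_1([-1,1]^n,\Z^n)=1$, so $\mu_1=1/2$, and the product is $\frac{2^n}{n!}2^{-n}=\frac1{n!}$.

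The main (mild) obstacle is bookkeeping. Passing from the product of successive minima to $\lambda_1^n$ loses nothing in the extremal cases, precisely because all minima are equal there. Beyond that, one only needs to track the factor $2$ in $\cs(K)^\circ=2D(K)^\circ$ as it enters \eqref{eq:width}.
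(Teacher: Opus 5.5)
Your proof is correct and follows the paper's argument. You weaken the products in \eqref{eq:central-polar} and \eqref{eq:symmetric-polar} to $\lambda_1^n$, substitute $\lambda_1(\cs(K)^\circ,\Lambda^*)=1/(2\mu_1(K,\Lambda))$ from \eqref{eq:width}, and get sharpness from $S_n$ and $B_1^n$ with $\mu_1=1/2$, exactly as the paper does; your check that \eqref{eq:width} needs no centering (since $\cs(K)$ always has the origin in its interior) is a detail the paper leaves implicit.
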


\begin{proof}
Since $\prod_i\lambda_i(\cs(K)^\circ,\Lambda^*)\geq\lambda_1(\cs(K)^\circ,\Lambda^*)^n$, the general bound follows
from \eqref{eq:central-polar} and \eqref{eq:width}. For an $o$-symmetric convex body $C$, we have $\cs(C)=C$. Combining~\eqref{eq:symmetric-polar} with~\eqref{eq:width} in the same way proves the symmetric case. The simplex $S_n$ and the cross-polytope $B_1^n$ satisfy
$$\mu_1(S_n,\Z^n)=\mu_1(B_1^n,\Z^n)=\frac{1}{2}$$
and attain the two constants, respectively.
\end{proof}

Makai Jr.~\cite[Theorem~1]{Makai} established a connection between non-separable lattice families and lattice packings through polarity. The conjectured sharp density bounds for these families are stated in Makai and Martini~\cite[Conjecture~3.14]{MakaiMartini}. We now derive these bounds from Corollary~\ref{cor:covering}.

The lattice family $\{K+v:v\in\Lambda\}$ is called non-separable if every affine hyperplane meets at least one translate. Its density, counting multiplicity, is $\vol_n(K)/\det(\Lambda)$. The lattice-width characterization underlying \eqref{eq:width} says that this family is non-separable precisely when $w_\Lambda(K)\geq 1$, or equivalently $\mu_1(K,\Lambda)\leq1$. Corollary~\ref{cor:covering} therefore gives
$$\frac{\vn nK}{\det(\Lambda)}\geq\frac{n+1}{2^n n!},\qquad \frac{\vn nC}{\det(\Lambda)}\geq\frac{1}{n!}$$
for non-separable lattice families generated by a convex body $K$ and by an $o$-symmetric convex body $C$, respectively. Equality holds for $K=S_n/2$ and $C=B_1^n/2$ with $\Lambda=\Z^n$. In dimension three, the constants are $1/12$ and $1/6$. These bounds and equality examples establish both assertions of~\cite[Conjecture~3.14]{MakaiMartini}. 

\section{Mixed-volume consequences}

The mixed volume $V(K_1,\ldots,K_n)$ is defined by requiring $n!V(K_1,\ldots,K_n)$ to be the coefficient of $t_1\cdots t_n$ in the volume polynomial $\vol_n(t_1K_1+\cdots+t_nK_n)$, $t_i\geq0$. We write $K[j]$ for $j$ copies of $K$; in particular, $V(K[n])=\vol_n(K)$. Repeated application of the Alexandrov--Fenchel inequality gives
\begin{equation}\label{eq:af}
 V(K_1,\ldots,K_n)
 \geq\left(\prod_{i=1}^{n}\vol_n(K_i)\right)^{1/n}
\end{equation}
for full-dimensional convex bodies $K_1,\ldots,K_n\subset\R^n$; see~\cite[Sections~7.3--7.4]{Schneider}. Bourgain and Milman~\cite{BourgainMilman} combined this inequality
with their reverse Santal\'o inequality to obtain a lower bound for
 $$V(K_1,\ldots,K_n)V(K_1^\circ,\ldots,K_n^\circ),
 \qquad 0\in\interior K_i\ (1\leq i\leq n).$$
Applying the same argument to the Mahler and Makai inequalities yields the following sharp bounds. Repeated bodies are allowed.

\begin{corollary}[Paired mixed volumes]\label{cor:mixed}
\leavevmode
\begin{enumerate}
\item For $o$-symmetric convex bodies $C_1,\ldots,C_n\subset\R^n$,
$$V(C_1,\ldots,C_n)V(C_1^\circ,\ldots,C_n^\circ)\geq\frac{4^n}{n!}.$$
\item For convex bodies $K_1,\ldots,K_n\subset\R^n$,
$$V(K_1,\ldots,K_n)V((K_1-s(K_1))^\circ,\ldots,(K_n-s(K_n))^\circ)\geq\frac{(n+1)^{n+1}}{(n!)^2}.$$
\item For convex bodies $K_1,\ldots,K_n\subset\R^n$,
$$V(K_1,\ldots,K_n)V(D(K_1)^\circ,\ldots,D(K_n)^\circ)\geq\frac{n+1}{n!}.$$
\end{enumerate}
Each constant is best possible.
\end{corollary}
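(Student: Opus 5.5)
The plan is to apply \eqref{eq:af} separately to each factor and then invoke the relevant volume-product inequality body by body. In each case the pairs $(K_i,L_i)$ are $(C_i,C_i^\circ)$, $(K_i,(K_i-s(K_i))^\circ)$, or $(K_i,D(K_i)^\circ)$. All of these are full-dimensional convex bodies: the polars are taken with the origin in the interior, since $s(K_i)\in\interior K_i$ and $D(K_i)$ is $o$-symmetric with nonempty interior. Hence \eqref{eq:af} gives
$$V(K_1,\ldots,K_n)\,V(L_1,\ldots,L_n)\ge\prod_{i=1}^n\bigl(\vol_n(K_i)\vol_n(L_i)\bigr)^{1/n}.$$
Each factor $\vol_n(K_i)\vol_n(L_i)$ is bounded below by the same constant $c_n$:
\begin{itemize}
\item $c_n=4^n/n!$ from \eqref{eq:sm} in case (1);
\item $c_n=(n+1)^{n+1}/(n!)^2$ from \eqref{eq:gm} in case (2);
\item $c_n=(n+1)/n!$ from Theorem~\ref{thm:transfer} in case (3).
\end{itemize}
So the product of the $n$th roots is at least $(c_n^n)^{1/n}=c_n$. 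One small point needs care in (2): translation invariance of mixed volumes means $V(K_1,\ldots,K_n)$ is unchanged if each $K_i$ is replaced by $K_i-s(K_i)$. This step is not strictly needed, since \eqref{eq:af} is applied to $K_i$ directly, but it confirms that the statement is consistent.

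For sharpness, I will take all bodies equal, so that the mixed volumes reduce to ordinary volumes via $V(K[n])=\vol_n(K)$. Then the products become $\vol_n(B_1^n)\vol_n([-1,1]^n)=4^n/n!$ for (1), and the simplex volume products for (2) and (3). The simplex gives equality in \eqref{eq:gm} by the general Mahler conjecture's equality case; to check this directly, I would compute it for $S_n$, whose Santal\'o point is $0$, and use the standard value $(n+1)^{n+1}/(n!)^2$. For (3), Theorem~\ref{thm:transfer} already records equality for simplices.

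The only step that needs checking is the version of \eqref{eq:af} being used. It requires full-dimensional bodies, which holds here, and it follows from the Alexandrov--Fenchel consequence $V(K_1,\ldots,K_n)^n\ge\prod\vol_n(K_i)$. I do not expect a real obstacle. The main thing to verify is the equality example for (2), namely that $\vol_n(S_n)\vol_n(S_n^\circ)=(n+1)^{n+1}/(n!)^2$. I would check this by noting that $S_n^\circ$ is the simplex $\{y:\langle y,e_i\rangle\le1,\ \langle y,-\mathbf 1\rangle\le1\}$, a linear image of $-nS_n$ up to the appropriate scaling, and computing its volume as $(n+1)^n/n!$.
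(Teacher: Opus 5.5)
Your proposal is correct and follows the paper's proof: apply \eqref{eq:af} to each of the two lists, multiply, and bound each factor $\vol_n(K_i)\vol_n(L_i)$ below by the relevant volume-product inequality \eqref{eq:sm}, \eqref{eq:gm}, or \eqref{eq:makai}. Sharpness comes, as you say, from taking all bodies equal (a cube or cross-polytope in (1), a simplex in (2) and (3)), and your direct check for (2) is right, since $S_n^\circ=\mathbf 1+(n+1)\conv\{0,-e_1,\ldots,-e_n\}$ has volume $(n+1)^n/n!$.
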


\begin{proof}
For two lists of convex bodies $K_1,\ldots,K_n$ and $L_1,\ldots,L_n$, applying \eqref{eq:af} to each list gives
$$V(K_1,\ldots,K_n)V(L_1,\ldots,L_n)\geq\left(\prod_{i=1}^n\bigl(\vn n{K_i}\,\vn n{L_i}\bigr)\right)^{1/n}.$$
The volume-product inequalities \eqref{eq:sm}, \eqref{eq:gm}, and \eqref{eq:makai} give the three conclusions. For sharpness, take all $C_i$ equal to the same cube in part~(1), and all $K_i$ equal to the same simplex in parts~(2) and~(3).
\end{proof}

Taking all bodies equal recovers the corresponding volume-product bound, so each assertion is equivalent to it in the presence of Alexandrov--Fenchel.

\begin{remark}[A reflected mixed-volume form]\label{rem:reflected}

Integrating the horizontal sections of $\susp K$, as in the proof
of~\cite[Lemma~4.5]{ArtsteinSadovskySanyal}, gives
\begin{align*}
 \vn{n+1}{\susp K}
 &=2\int_0^1\vol_n(tK+(1-t)(-K))\,dt\\
 &=\frac{2}{n+1}\sum_{j=0}^n V(K[j],-K[n-j]).
\end{align*}
Combining the preceding formula for $\vol_{n+1}(\susp K)$ with
the polar-volume identity in~\eqref{eq:suspension-pair} and applying
the symmetric Mahler inequality in dimension $n+1$, we obtain
\begin{align*}
 \frac{2^{n+2}}{(n+1)^2}\vol_n(D(K)^\circ)
 \sum_{j=0}^{n}V(K[j],-K[n-j])
 &=\vol_{n+1}(\susp K)\vol_{n+1}((\susp K)^\circ)\\
 &\geq\frac{4^{n+1}}{(n+1)!}.
\end{align*}
Hence
\begin{equation}\label{eq:reflected}
 \vol_n(D(K)^\circ)\sum_{j=0}^{n}V(K[j],-K[n-j])
 \geq\frac{2^n(n+1)}{n!}.
\end{equation}
Equality holds when $K$ is a simplex, since $\susp K$ is then a linear image of a cross-polytope, as observed after Lemma~\ref{lem:suspension}.

Moreover, the volume upper bound in~\eqref{eq:suspension-pair} can be rewritten as
$$\sum_{j=0}^{n}V(K[j],-K[n-j])\leq2^n\vol_n(K).$$
Together with~\eqref{eq:reflected}, this gives
\begin{align*}
 \vol_n(K)\vol_n(D(K)^\circ)
 &\geq\frac{1}{2^n}\vol_n(D(K)^\circ)
       \sum_{j=0}^{n}V(K[j],-K[n-j])\\
 &\geq\frac{n+1}{n!},
\end{align*}
which is Makai's inequality. Thus~\eqref{eq:reflected} provides a mixed-volume formulation of the proof of Theorem~\ref{thm:transfer}.
\end{remark}


\begin{thebibliography}{99}

\bibitem{Aliev}
A. Aliev, Mahler-type volume inequality for convex bodies with tetrahedral symmetry, preprint (2025), \href{https://arxiv.org/abs/2511.14991v1}{arXiv:2511.14991v1}.

\bibitem{AlievHenk2023}
I. Aliev and M. Henk, Minkowski's successive minima in convex and discrete geometry, \emph{Commun. Math.} \textbf{31} (2023), 35--59, DOI: \href{https://doi.org/10.46298/cm.11155}{10.46298/cm.11155}.

\bibitem{AlvarezBalacheffTzanev}
J. C. \'Alvarez Paiva, F. Balacheff and K. Tzanev, Isosystolic inequalities for optical hypersurfaces, \emph{Adv. Math.} \textbf{301} (2016), 934--972, DOI: \href{https://doi.org/10.1016/j.aim.2016.07.003}{10.1016/j.aim.2016.07.003}.

\bibitem{ArtsteinPutterman}
S. Artstein-Avidan and E. Putterman, On unbalanced difference bodies and Godbersen's conjecture, \emph{Proc. Amer. Math. Soc.} \textbf{153} (2025), 5345--5359, DOI: \href{https://doi.org/10.1090/proc/17428}{10.1090/proc/17428}.

\bibitem{ArtsteinSadovskySanyal}
S. Artstein-Avidan, S. Sadovsky and R. Sanyal, Geometric inequalities for anti-blocking bodies, \emph{Commun. Contemp. Math.} \textbf{25} (2023), 2150113, DOI: \href{https://doi.org/10.1142/S0219199721501133}{10.1142/S0219199721501133}.

\bibitem{BourgainMilman}
J. Bourgain and V. D. Milman, New volume ratio properties for convex symmetric bodies in $\R^n$, \emph{Invent. Math.} \textbf{88} (1987), 319--340, DOI: \href{https://doi.org/10.1007/BF01388911}{10.1007/BF01388911}.

\bibitem{Guggenheimer1978}
H. Guggenheimer, Correction to ``Polar reciprocal convex bodies'', \emph{Israel J. Math.} \textbf{29} (1978), 312, DOI: \href{https://doi.org/10.1007/BF02762019}{10.1007/BF02762019}.

\bibitem{HenkHenzeHernandez}
M. Henk, M. Henze and M. A. Hern\'andez Cifre, Variations of Minkowski's theorem on successive minima, \emph{Forum Math.} \textbf{28} (2016), 311--325, DOI: \href{https://doi.org/10.1515/forum-2014-0093}{10.1515/forum-2014-0093}.

\bibitem{HenkSchymuraXue}
M. Henk, M. Schymura and F. Xue, Packing minima and lattice points in convex bodies, \emph{Mosc. J. Comb. Number Theory} \textbf{10} (2021), 25--48, DOI: \href{https://doi.org/10.2140/moscow.2021.10.25}{10.2140/moscow.2021.10.25}.

\bibitem{HenkXue}
M. Henk and F. Xue, On successive minima-type inequalities for the polar of a convex body, \emph{Rev. R. Acad. Cienc. Exactas F\'is. Nat. Ser. A Mat. RACSAM} \textbf{113} (2019), 2601--2616, DOI: \href{https://doi.org/10.1007/s13398-019-00648-x}{10.1007/s13398-019-00648-x}.

\bibitem{KannanLovasz}
R. Kannan and L. Lov\'asz, Covering minima and lattice-point-free convex bodies, \emph{Ann. of Math. (2)} \textbf{128} (1988), 577--602, DOI: \href{https://doi.org/10.2307/1971436}{10.2307/1971436}.

\bibitem{Mahler1939}
K. Mahler, Ein \"Ubertragungsprinzip f\"ur konvexe K\"orper, \emph{\v{C}asopis P\v{e}st. Mat. Fys.} \textbf{68} (1939), 93--102, DOI: \href{https://doi.org/10.21136/CPMF.1939.109441}{10.21136/CPMF.1939.109441}.

\bibitem{Mahler1948}
K. Mahler, On lattice points in polar reciprocal convex domains, \emph{Indag. Math.} \textbf{10} (1948), 176--179.

\bibitem{Mahler1974}
K. Mahler, Polar analogues of two theorems by Minkowski, \emph{Bull. Austral. Math. Soc.} \textbf{11} (1974), 121--129, DOI: \href{https://doi.org/10.1017/S0004972700043690}{10.1017/S0004972700043690}.

\bibitem{Mahler1976}
K. Mahler, An addition to a note of mine, \emph{Bull. Austral. Math. Soc.} \textbf{14} (1976), 397--398, DOI: \href{https://doi.org/10.1017/S0004972700025296}{10.1017/S0004972700025296}.

\bibitem{Makai}
E. Makai, Jr., On the thinnest non-separable lattice of convex bodies, \emph{Studia Sci. Math. Hungar.} \textbf{13} (1978), 19--27.

\bibitem{MakaiMartini}
E. Makai, Jr., and H. Martini, Density estimates for $k$-impassable lattices of balls and general convex bodies in $\R^n$, preprint (2016), \href{https://arxiv.org/abs/1612.01307v1}{arXiv:1612.01307v1}.

\bibitem{MeyerReisner}
M. Meyer and S. Reisner, The convex intersection body of a convex body, \emph{Glasg. Math. J.} \textbf{53} (2011), 523--534, DOI: \href{https://doi.org/10.1017/S0017089511000103}{10.1017/S0017089511000103}.

\bibitem{OpenAIGeneral}
OpenAI, The Mahler conjecture for general convex bodies, preprint (September 22, 2026), repository version \href{https://github.com/openai/math/blob/adc7f1241b42e322a6451854ab7e4b4c146bf78a/preprints/The-Mahler-Conjecture-for-General-Convex-Bodies-September-22-2026/paper.pdf}{\texttt{adc7f1241b42}}.

\bibitem{OpenAISymmetric}
OpenAI, The symmetric Mahler conjecture and its equality cases, preprint (September 22, 2026), repository version \href{https://github.com/openai/math/blob/adc7f1241b42e322a6451854ab7e4b4c146bf78a/preprints/The-symmetric-Mahler-conjecture-and-its-equality-cases-September-22-2026/paper.pdf}{\texttt{adc7f1241b42}}.

\bibitem{RogersShephard1958}
C. A. Rogers and G. C. Shephard, Convex bodies associated with a given convex body, \emph{J. London Math. Soc.} \textbf{33} (1958), 270--281, DOI: \href{https://doi.org/10.1112/jlms/s1-33.3.270}{10.1112/jlms/s1-33.3.270}.

\bibitem{Schneider}
R. Schneider, \emph{Convex bodies: the Brunn--Minkowski theory}, 2nd expanded ed., Encyclopedia Math. Appl. 151, Cambridge University Press, Cambridge, 2014, DOI: \href{https://doi.org/10.1017/CBO9781139003858}{10.1017/CBO9781139003858}.

\bibitem{WangXueZong}
J. Wang, F. Xue and C. Zong, On Boltyanski and Gohberg's partition conjecture, \emph{Bull. London Math. Soc.} \textbf{56} (2024), 140--149, DOI: \href{https://doi.org/10.1112/blms.12919}{10.1112/blms.12919}.

\end{thebibliography}
\end{document}